\newtheorem{thm}{Theorem}[section]
\newtheorem{cor}[thm]{Corollary}
\newtheorem{lem}[thm]{Lemma}
\theoremstyle{definition}
\theoremstyle{remark}
\newtheorem{rem}[thm]{Remark}
\numberwithin{equation}{section}
\def\L2{L^{2}}
\def\N{\Bbb{N}}
\def\R{\Bbb{R}}
\def\e{\varepsilon}
\def\m1{^{-1}}
\def\S{\mathcal{S}}
\begin{document}

\title[]{Negative definite functions on groups\\ with polynomial growth}%
\author{Fabio Cipriani.}
\address{Dipartimento di Matematica, Politecnico Milano Piazza Leonardo da Vinci 33 20132 Milano Italy}
\email{fabio.cipriani@polimi.it}
\thanks{This work was supported by Italy I.N.D.A.M. – France C.N.R.S. G.D.R.E.-G.R.E.F.I. Geometrie Noncommutative and by Italy M.I.U.R.-P.R.I.N. project N. 2012TC7588-003.}
\author{Jean-Luc Sauvageot}
\address{Institut de Math\'ematiques, CNRS Universit\'e Denis Diderot F-75205 Paris Cedex 13, France}
\email{jlsauva@math.jussieu.fr}
\subjclass{Primary 20F65; Secondary 20F69, 57M07}
\keywords{Group, polynomial growth, negative definite function, homogeneous dimension}
\date{December 8, 2015}
\begin{abstract}
The aim of this work is to show that on a locally compact, second countable, compactly generated group $G$ with polynomial growth and homogeneous dimension $d_h$, there exist a continuous, proper, negative definite function $\ell$ with polynomial growth dimension $d_\ell$ arbitrary close to $d_h$.
\end{abstract}
\maketitle
\section{Introduction and statement of the results}
The length function $\ell$ associated to a finite set of generators of a countable, discrete, finitely generated group $\Gamma$, may be used to reveal interesting aspects of the group itself. This is the spirit of metric group theory (see [12]). In a famous result of U. Haagerup [11], for example, the length functions on the free groups $\mathbb{F}_n$ with $n\ge 2$ generators, has been used to prove that, even if these groups are not amenable, they still have an approximation property: there exists a sequence $\varphi_n\in c_0(\mathbb{F}_n)$ of normalized, positive definite functions, converging pointwise to the constant function$1$. Correspondingly, the trivial representation is weakly contained in a $C_0$-representation (one whose coefficients vanish at infinity) (see [10]). These properties of $\mathbb{F}_n$ only depend to the fact that the length function is {\it negative definite and proper}. The class of groups where a length function has the latter properties has been characterized in several independent ways and form the object of intensive investigations (see [2]). On another, somewhat opposite, side, a deeply studied class of groups are those possessing the {Kazhdan property (T)}, i.e. those where {\it every negative definite function is bounded} (see for example [1]). Finer asymptotic properties of length functions are connected to other fundamental properties of groups. For example, if $\ell$ has {\it polynomial growth} (see below for the definition) then $\Gamma$ is {\it amenable} and its von Neumann algebra $vN(\Gamma)$ is {\it hyperfinite} (weak closure on an increasing sequence of matrix algebras).
\par\noindent
However, length functions are not necessarily negative definite and one may wonder how far is a given length function from being negative definite.
\par\noindent
Reversing the point of view, in the present work we prove that on finitely generated, discrete groups $\Gamma$ or, more generally, on compactly generated, locally compact, second countable groups $G$, with polynomial growth, there always exists a continuous, negative definite function with a (polynomial) growth arbitrarily close to the one of $\Gamma$ or $G$.
\par\noindent
The consideration of negative definite functions and their growth properties are also meaningful in Noncommutative Potential Theory (see [3], [6]) and Quantum Probability (see [4]).
\par\noindent
As an application of the main results, we provide a natural construction of a noncommutative Dirichlet form on the group von Neumann algebra $\lambda(G)''$ with an upper spectral growth dimension $d_S$ bounded above by the polynomial growth homogeneous dimension $d_h$ of $G$.
\vskip0.2truecm
Let $G$ be a locally compact, second countable, compactly generated group with identity $e\in G$ and let $\mu$ be one of its
left Haar measures.
\vskip0.2truecm\noindent
The main results of the work are the following.

\begin{thm}\label{thm1}
Suppose that $G$ has polynomial growth and homogeneous dimension $d_h$. Then, for all $d>d_h$, there exists a continuous, proper, negative type function $\ell$ with polynomial growth such that
\begin{equation}\label{cresc}
\mu\{s\in G\,|\,\ell(s)\leq x\} =O(x^d)\,, \quad x\to +\infty.
\end{equation}
\end{thm}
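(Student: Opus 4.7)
The plan is to construct $\ell$ as an infinite positive combination of coefficients of the left regular representation associated to a nested sequence of Følner-type sets exhausting $G$. Fix a compact symmetric generating neighbourhood $U$ of $e$ and denote by $|g|_U=\min\{n\ge 0:g\in U^n\}$ the associated word-length. Polynomial growth of homogeneous dimension $d_h$ gives $\mu(U^n)\asymp n^{d_h}$, together with the annulus estimate $\mu(U^{r+s})-\mu(U^{r-s})\lesssim s\,r^{d_h-1}$ for $1\le s\le r$. For each $n\ge 0$ set $K_n:=U^{2^n}$, $f_n:=\chi_{K_n}/\sqrt{\mu(K_n)}\in L^2(G)$, and
\[
\psi_n(g)\,:=\,1-\mathrm{Re}\,\langle\lambda(g)f_n,f_n\rangle\,=\,\frac{\mu(K_n\triangle gK_n)}{2\mu(K_n)}.
\]
Each $\psi_n$ is continuous, non-negative and negative definite, since it has the form $1-\mathrm{Re}\,\varphi$ for a coefficient $\varphi$ of the regular representation.

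Two geometric bounds drive the construction. If $|g|_U\ge 2\cdot 2^n$, then $gK_n\cap K_n=\emptyset$ by the triangle inequality, hence $\psi_n(g)=1$. If $|g|_U\le 2^n$, then $K_n\triangle gK_n\subset U^{2^n+|g|_U}\setminus U^{2^n-|g|_U}$, and the annulus estimate gives $\psi_n(g)\lesssim |g|_U/2^n$. Now fix $d>d_h$, set $\alpha:=d_h/d\in(0,1)$, and define
\[
\ell(g)\,:=\,\sum_{n=0}^{\infty}2^{n\alpha}\,\psi_n(g).
\]
Splitting the sum at $N(g):=\lceil\log_2|g|_U\rceil$ and applying the two estimates yields locally uniform convergence together with the two-sided bound $c\,|g|_U^{\alpha}\le \ell(g)\le C\,|g|_U^{\alpha}$ for $|g|_U\ge 1$: indeed $\sum_{n\ge N}2^{n\alpha}\cdot |g|_U/2^n\asymp 2^{N\alpha}$ and $\sum_{n<N}2^{n\alpha}\asymp 2^{N\alpha}$, while the lower bound comes from the range $n<\log_2(|g|_U/2)$ on which $\psi_n(g)=1$. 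Thus $\ell$ is continuous, proper, and negative definite as a pointwise limit of finite positive combinations of negative definite functions.

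The polynomial growth of the sub-level sets now follows formally: from $\ell(g)\ge c\,|g|_U^{\alpha}$ one gets $\{s\in G:\ell(s)\le x\}\subset\{s:|s|_U\le C'x^{1/\alpha}\}$, so by the volume bound $\mu\{|s|_U\le R\}\lesssim R^{d_h}$ one concludes
\[
\mu\{s\in G:\ell(s)\le x\}\,\lesssim\,(x^{1/\alpha})^{d_h}\,=\,x^{d},
\]
which is exactly \eqref{cresc}. The main technical obstacle is the passage from the bulk asymptotic $\mu(U^n)\asymp n^{d_h}$ to the differential-type annulus bound $\mu(U^{r+s})-\mu(U^{r-s})\lesssim s\,r^{d_h-1}$, as well as its adaptation to the continuous (compactly generated) setting: this is the step that really requires the structure theory for groups of polynomial growth (Gromov's theorem and the sharp Bass–Guivarc'h asymptotic $\mu(U^n)/n^{d_h}\to c>0$) rather than a mere polynomial upper bound, since the whole scheme relies on precise control of the ``boundary'' of the sets $K_n$ at every scale.
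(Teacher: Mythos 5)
Your overall architecture --- a locally uniformly convergent sum of weighted functions $1-\mathrm{Re}\,\langle\lambda(g)f_n,f_n\rangle$ built from normalized indicators of an exhausting family of balls --- is the same as the paper's, but the step you yourself flag as ``the main technical obstacle'' is a genuine gap, not a routine adaptation. The annulus estimate $\mu(U^{r+s})-\mu(U^{r-s})=O(s\,r^{d_h-1})$ does not follow from polynomial growth, nor from the sharp asymptotics you invoke: Gromov plus Bass--Guivarc'h/Pansu give only $\mu(U^n)=c\,n^{d_h}\big(1+o(1)\big)$, and then $\mu(U^{r+s})-\mu(U^{r-s})=2c\,d_h\,s\,r^{d_h-1}+o(r^{d_h})$, where the error term $o(r^{d_h})$ is not known to be $O(r^{d_h-1})$ in general (this is the problem of the rate of convergence in Pansu's theorem; the known general rates are much weaker). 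This matters because the annulus bound is precisely what makes your series converge: without it, the available bound $\psi_n(g)\le 1-\mu(U^{2^n-|g|_U})/\mu(U^{2^n})$ need not tend to $0$ fast enough --- or at all along a subsequence of scales where the volume has large relative jumps, which the hypothesis $\limsup_n \log\mu(K^n)/\log n=d_h$ permits --- so $\sum_n 2^{n\alpha}\psi_n(g)$ may diverge and $\ell$ may fail to be finite or continuous. (Note also that the two-sided bound $\mu(U^n)\asymp n^{d_h}$ is not among the theorem's hypotheses, since $d_h$ is defined by a $\limsup$ and only an upper polynomial bound is assumed; your normalization $\mu(K_n)\asymp 2^{nd_h}$ already imports the deep structure theory, whereas the paper's argument is elementary. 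By contrast, your lower bound $\ell(g)\ge c\,|g|_U^{\alpha}$ and the resulting sublevel-set estimate are fine, since they use only the disjointness $gK_n\cap K_n=\emptyset$ for $|g|_U>2^{n+1}$ and the assumed upper volume bound.)

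The paper circumvents exactly this difficulty by a density argument in place of a pointwise annulus bound. Writing $1+\alpha_k=\mu(K^k)/\mu(K^{k-1})$, the inequality $\sum_{k\le n}\log(1+\alpha_k)\le d'\log(n+1)$ forces the set of ``bad'' scales $\{k:\alpha_k>k^{-\beta}\}$ to have vanishing density (Lemma 3.1), and a second counting argument (Lemma 3.2) shows that for density-one many $n$ one can select a good scale $k(n)\in[n^\gamma,(n+1)^\gamma]$ with $\alpha_{k(n)}\le k(n)^{-\beta}$. The sum defining $\ell$ is taken only over these selected scales, where $1-\omega_n(s)\le p\,n^{-\beta\gamma}$ on $K^p$ with $\beta\gamma>1$, which gives convergence; properness and the bound $\mu\{\ell\le x\}=O(x^{\gamma d})$ then follow from the vanishing density of the excluded set. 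To rescue your scheme you would need to replace the annulus estimate by such a selection of good scales (or prove an error bound in Pansu's theorem that is not currently available).
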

\begin{thm}\label{thm2}
Suppose that $G$ has polynomial growth and homogeneous dimension $d_h$. Then, there exists a continuous, proper, negative type function $\ell$ with polynomial growth such that
\begin{equation}\label{crescpiu}
\forall d\,>d_h\, ,\quad \mu\{s\in G\,|\,\ell(s)\leq x\} =O(x^d)\quad \quad x\to +\infty.
\end{equation}
\end{thm}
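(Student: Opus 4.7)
The plan is to deduce Theorem~\ref{thm2} from Theorem~\ref{thm1} by a diagonal construction: combine a suitable sequence of negative definite functions, one tuned to each exponent $d_h + 1/n$, into a single negative definite function. This works because the cone of continuous negative definite functions on $G$ vanishing at $e$ is stable under positive scalar multiplication, addition, and pointwise (in particular locally uniform) limits.

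First I would apply Theorem~\ref{thm1} with exponent $d_n := d_h + 1/n$ to produce, for each $n \geq 1$, a continuous, proper, negative definite function $\ell_n \colon G \to [0,\infty)$ together with a constant $C_n > 0$ satisfying
\[
\mu\{s \in G : \ell_n(s) \leq x\} \leq C_n x^{d_n} \quad \text{for all } x \geq 1.
\]
Pick an increasing sequence of compacta $(K_n)$ with $\bigcup_n K_n = G$ (possible by second countability) and set $M_n := \sup_{K_n} \ell_n < \infty$. Then choose positive weights $c_n$ with $c_n(1 + M_n) \leq 2^{-n}$ and define
\[
\ell(s) := \sum_{n=1}^{\infty} c_n \ell_n(s).
\]
The choice of $c_n$ forces uniform convergence of the series on each $K_m$, since for $n \geq m$ one has $\sup_{K_m} c_n\ell_n \leq c_n M_n \leq 2^{-n}$. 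Therefore $\ell$ is continuous on $G$, is negative definite as a convergent positive combination of negative definite functions, and is proper because $\ell \geq c_1 \ell_1$ with $\ell_1$ proper.

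For the growth bound I would fix $n$ and use the pointwise inequality $\ell \geq c_n \ell_n$ to get the inclusion
\[
\{s \in G : \ell(s) \leq x\} \subseteq \{s : \ell_n(s) \leq x/c_n\},
\]
which gives $\mu\{s : \ell(s) \leq x\} \leq C_n c_n^{-d_n} x^{d_n}$ for every $x \geq c_n$. Given any $d > d_h$, picking $n$ with $d_n < d$ yields $\mu\{s : \ell(s) \leq x\} = O(x^{d_n}) = O(x^d)$, which is precisely \eqref{crescpiu}.

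The only delicate point is the calibration of the weights $c_n$: they must decay quickly enough for $\sum_n c_n \ell_n$ to converge uniformly on compacta — ensuring continuity and local boundedness of $\ell$ — yet the price paid in the growth estimate is confined to the multiplicative factor $c_n^{-d_n}$ in the implicit constant and does not affect the exponent $d_n$. Nothing else stands in the way, since negative definiteness, properness, and the polynomial volume bound are all preserved by this diagonal construction.
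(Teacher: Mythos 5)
Your proposal is correct and follows essentially the same route as the paper: both take the functions $\ell_m$ supplied by Theorem~\ref{thm1} for a sequence of exponents decreasing to $d_h$, form a weighted series $\sum_m c_m\ell_m$ with weights small enough to force locally uniform convergence, and extract the bound \eqref{crescpiu} from the pointwise inequality $\ell\geq c_m\ell_m$ for each fixed $m$. The only difference is bookkeeping: the paper normalizes $\sup_K\ell_m=1$ and uses the quadratic growth of negative definite functions on $K^k$ to justify convergence with weights $2^{-m}$, whereas you calibrate the weights against the suprema on an exhausting sequence of compacta (best taken to be the $K^n$, whose interiors cover $G$), which is an equally valid and if anything more carefully written version of the same argument.
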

As a straightforward application of these results to Noncommutative Potential Theory we have the following corollary. Further investigations will be discussed in [7].
\begin{cor}
Let $\Gamma$ be a discrete, finitely generated group with polynomial growth and homogeneous dimension $d_h$. Let $\lambda(\Gamma)''$ be the von Neumann algebra generated by the left regular representation and let $\tau$ be its trace.
\par\noindent Then there exists on $L^2(\lambda(\Gamma)'',\tau)$ a noncommutative Dirichlet form $(\mathcal{E},\mathcal{F})$ with discrete spectrum and upper spectral dimension $d_S \le d_h$.
\end{cor}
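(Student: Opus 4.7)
The plan is to deduce the corollary from Theorem \ref{thm2} by building the Dirichlet form from the heat semigroup of the multiplier associated with the negative definite length function it delivers. First I would apply Theorem \ref{thm2} to obtain a proper, negative type function $\ell : \Gamma \to \R_+$ with the property that, for every $d > d_h$, the counting function
\[ N_\ell(x) := \#\set{g \in \Gamma : \ell(g)\leq x} \]
satisfies $N_\ell(x) = O(x^d)$ as $x \to +\infty$.

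Next I would invoke Schoenberg's theorem to conclude that $\{e^{-t\ell}\}_{t\geq 0}$ is a family of normalized positive definite functions on $\Gamma$, so that the Herz--Schur multipliers $T_t\lambda(g) = e^{-t\ell(g)}\lambda(g)$ extend to a $w^*$-continuous, trace-preserving, completely positive Markovian semigroup on $\lambda(\Gamma)''$. Under the canonical identification $L^2(\lambda(\Gamma)'',\tau) \simeq \ell^2(\Gamma)$ sending $\lambda(g)$ to $\delta_g$, the negative generator $L$ of $\{T_t\}$ acts diagonally by $L\delta_g = \ell(g)\delta_g$, and thus the associated noncommutative Dirichlet form is
\[ \E(a,a) = \sum_{g\in\Gamma} \ell(g)\,|\hat a(g)|^2, \qquad \F = \Bigl\{a\in\ell^2(\Gamma) : \sum_{g\in\Gamma}\ell(g)\,|\hat a(g)|^2 <+\infty\Bigr\}. \]

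Finally, the properness of $\ell$ on the discrete space $\Gamma$ forces each sublevel set $\set{g : \ell(g)\leq x}$ to be finite, so $L$ has pure point spectrum with finite-dimensional eigenspaces that accumulate only at $+\infty$; this gives discreteness of the spectrum. Its eigenvalue counting function is precisely $N_\ell$, and the growth bound from Theorem \ref{thm2} gives
\[ d_S := \limsup_{x\to +\infty} \frac{\log N_\ell(x)}{\log x} \leq d \quad \text{for every } d > d_h, \]
and hence $d_S\leq d_h$.

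The only non-cosmetic point in this scheme is the passage from the negative definite $\ell$ to a genuine noncommutative Dirichlet form on $L^2(\lambda(\Gamma)'',\tau)$, i.e.\ the verification that the Herz--Schur multiplier semigroup is symmetric with respect to the canonical trace, completely positive and Markovian, with $L^2$-generator exactly the diagonal multiplier $M_\ell$. This is the standard correspondence between negative type functions on $\Gamma$ and symmetric Markovian semigroups on $\lambda(\Gamma)''$ (Schoenberg together with De Canni\`ere--Haagerup-type arguments), so once Theorem \ref{thm2} is in hand the corollary follows essentially by transport of structure.
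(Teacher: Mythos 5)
Your proposal is correct and follows essentially the same route as the paper: both take the function $\ell$ from Theorem \ref{thm2}, identify $L^2(\lambda(\Gamma)'',\tau)$ with $\ell^2(\Gamma)$, and realize the Dirichlet form as $\mathcal{E}[a]=\sum_{s\in\Gamma}\ell(s)|a(s)|^2$ with generator the multiplication operator by $\ell$, whose spectrum is $\{\ell(s):s\in\Gamma\}$. The only difference is that where you unfold the Schoenberg/Herz--Schur multiplier argument to justify that this is a noncommutative Dirichlet form, the paper simply cites the standard correspondence from the literature; the discreteness and dimension count are identical.
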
\noindent
Here the {\it upper spectral dimension} $d_S$ of the Dirichlet form is defined as
\[
d_S:=\inf\{d>0:\limsup_{x\to +\infty}\sharp\{\lambda\in {\rm sp\,}(L): \lambda\le x\}/x^d<+\infty\}
\]
where ${\rm sp}(L)$ denotes the spectrum of the self-adjoint, nonnegative operator $(L,{\rm dom}(L))$ associated to $(\mathcal{E},\mathcal{F})$
\[
\mathcal{E}[a]=\|\sqrt{L}a\|^2_2\qquad a\in\mathcal{F}={\rm dom}(L)\, .
\]

\section{Groups with polynomial growth}
Let $K\subset G$ be a compact, symmetric, generator set with non empty interior
\[
G=\bigcup_{n=1}^\infty K^n\, ,\quad K^{-1}=K\,,\quad K^\circ\neq \emptyset\, ,
\]
where
\[
K^{-1}:=\{s^{-1}\in G:s\in K\}\, ,\qquad K^n:=\{s_1\dots s_n\in G:s_k\in K\, ,k=1,\dots ,n\}\, .
\]
We may always assume that $K$ is a neighborhood of the identity $e\in K^\circ\subset G$.
\par\noindent
Let us recall that $G$ has {\it polynomial growth} if
\begin{equation}
\exists\, c,d>0\quad {\rm such,\,\ that} \quad\mu(K^n)\leq c\,(n+1)^d\,,\qquad n\ge 1
\end{equation}
or, equivalently, if
\begin{equation}
\label{d}\frac{1}{\log(n+1)}\log(\mu(K^n))\leq d+\frac{1}{\log(n+1)}\log(c)\,,\qquad n\ge 1
\end{equation}
and in that case its {\it homogeneous dimension} is defined as
\begin{equation}
d_h=\limsup_{n\to \infty} \frac{1}{\log n}\log(\mu(K^n))\, .
\end{equation}
The class of locally compact, second countable, compactly generated groups with polynomial growth includes nilpotent, connected, real Lie groups and finitely generated, nilpotent, countable discrete groups (see [8] and [12] Chapter VII $\S$ 26). For example, the homogeneous dimension of the discrete Heisenberg group is $4$ (see [12] Chapter VII $\S$ 21).
\vskip0.2truecm\noindent
Recall also the M. Gromov's characterization [9] of finitely generated, countable discrete groups with polynomial growth as those which have a nilpotent subgroup of finite index (see also [12] Chapter VII $\S$ 29 and [13] for a generalization to locally compact groups).
\vskip0.2truecm\noindent
\begin{rem}
Let us observe that since $K$ is a neighborhood of the identity $e\in G$, we have $K^{n-1}\subset K^{n-1} K^\circ\subset (K^n)^\circ$ so that
\[
\bigcup_{k=0}^\infty (K^n)^\circ=G
\]
and we have an open cover of $G$. As a consequence
\vskip0.1truecm
\centerline{\it every compact set in $G$ is contained in $K^n$ for some $n\ge 1$.}
\end{rem}
\section{Proof of the Theorems}
In the following, we will consider the sequence $\{\alpha_n\}_{n=1}^\infty\subset [0,+\infty)$ defined by
\[
\alpha_1 :=0\, ,\qquad \frac{\mu(K^n)}{\mu(K^{n-1})}=:1+\alpha_n\, ,\qquad n\geq 2\, .
\]
Equation (\ref{d}) can thus be written as
\begin{equation}\label{d1}
\frac{1}{\log(n+1)} \sum_{k=1}^n \log(1+\alpha_k)\leq d+\frac{1}{\log(n+1)}\log(c)\leq d'\,,\qquad n\ge 1\, ,
\end{equation}
with $d':=d+\log_2 c$.

\subsection{Some preliminary estimates}
The proof of the theorems relies on the following lemmas.
\begin{lem}
For any $\beta\in (0,1)$ the set
\[
E_\beta :=\big\{n\in \N^*\,:\,\alpha_n> n^{-\beta}\big\}
\]
has vanishing density.
\end{lem}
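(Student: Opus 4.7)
The plan is to exploit the polynomial growth bound (\ref{d1}), which states
\[
\sum_{k=1}^n \log(1+\alpha_k) \leq d'\log(n+1), \qquad n\ge 1,
\]
and extract from it a control on the number of indices $k\leq N$ that can lie in $E_\beta$. Fix $N$ and set $m_N := |E_\beta \cap \{1,\dots,N\}|$; the goal is to show $m_N/N \to 0$.

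First, since $\alpha_k > k^{-\beta}$ for every $k\in E_\beta$, the monotonicity of $\log$ gives $\log(1+\alpha_k) > \log(1+k^{-\beta})$. Summing over $k \in E_\beta \cap \{1,\dots,N\}$ and using (\ref{d1}), I obtain
\[
\sum_{k \in E_\beta \cap \{1,\dots,N\}} \log(1+k^{-\beta}) \;\leq\; d'\log(N+1).
\]
Next, the function $k \mapsto \log(1+k^{-\beta})$ is strictly decreasing, so the left-hand sum is minimized (for a prescribed cardinality $m_N$) when $E_\beta\cap\{1,\dots,N\}$ consists of the largest possible indices, namely $\{N-m_N+1,\dots,N\}$. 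Every term in such a sum is bounded below by $\log(1+N^{-\beta})$, giving
\[
m_N \cdot \log(1+N^{-\beta}) \;\leq\; d'\log(N+1).
\]

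Finally, for $N$ sufficiently large one has $\log(1+N^{-\beta}) \geq \tfrac{1}{2}N^{-\beta}$, which yields $m_N \leq 2d'\, N^\beta \log(N+1)$. Since $\beta<1$, it follows that
\[
\frac{m_N}{N} \;\leq\; \frac{2d'\log(N+1)}{N^{1-\beta}} \;\xrightarrow[N\to\infty]{}\; 0,
\]
which is exactly the vanishing-density conclusion. The only mildly delicate point is recognizing that, because $k\mapsto\log(1+k^{-\beta})$ is decreasing, the worst case for the lower bound is when the exceptional set sits at the top of $\{1,\dots,N\}$; after that observation the argument reduces to the elementary estimate above.
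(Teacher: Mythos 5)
Your proof is correct and follows essentially the same route as the paper: restrict the sum in (\ref{d1}) to the indices in $E_\beta\cap[1,N]$, bound each term below by a constant times $N^{-\beta}$, and conclude that $\sharp(E_\beta\cap[1,N])/N = O(N^{\beta-1}\log(N+1))\to 0$. The only cosmetic difference is that the paper fixes once and for all a constant $K$ with $\log(1+n^{-\beta})\ge Kn^{-\beta}$ for every $n$, whereas you use the factor $\tfrac12$ for $N$ large; both are immaterial to the conclusion.
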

\begin{proof}
Let us fix $K>0$ such that $\log(1+n^{-\beta})\geq K n^{-\beta}$ for all $n\ge 1$. By equation (\ref{d1}), we have
\begin{equation*} \begin{split}
d'\log(n+1)\geq \sum_{k\in [1,n]\cap E_\beta}\log(1+\alpha_k) \geq K n^{-\beta} \sharp\{ [1,n]\cap E_\beta\}
\end{split}\end{equation*}
so that
\[
\frac{1}{n}\sharp\{ [1,n]\cap E_\beta\}\leq \frac{d'}{K}\frac{\log(n+1)}{n^{1-\beta}}\to 0\quad {\text as}\quad n\to \infty\, .
\]
\end{proof}
\begin{lem}
For any $\beta\in (0,1)$ and $\gamma>1/\beta$, the set
\[
F_{\beta,\gamma}:=\big\{n\in \N\,:\,[\,n^\gamma,(n+1)^\gamma\,]\cap \N\subset E_\beta\,\big\}
\]
has vanishing density.
\end{lem}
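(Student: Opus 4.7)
The plan is to observe that $n\in F_{\beta,\gamma}$ is a very restrictive condition: it forces a whole block of consecutive integers, of length roughly $n^{\gamma-1}$ sitting near $n^\gamma$, to belong to $E_\beta$. Since $E_\beta$ already has vanishing density by the previous lemma, such large blocks cannot appear too often, so $F_{\beta,\gamma}$ must inherit vanishing density by a packing comparison.

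Concretely, since $\gamma>1/\beta>1$, convexity of $x\mapsto x^\gamma$ (equivalently, the mean-value theorem) gives $(n+1)^\gamma-n^\gamma\geq \gamma n^{\gamma-1}$, so for $n$ sufficiently large the interval $[n^\gamma,(n+1)^\gamma]$ contains at least $c_0\, n^{\gamma-1}$ integers, each of them lying in $E_\beta$ as soon as $n\in F_{\beta,\gamma}$. The intervals $[n^\gamma,(n+1)^\gamma]$ for distinct $n$ share at most an endpoint and, for $n\leq N$, are all contained in $[1,(N+1)^\gamma]$. Summing over $n\in F_{\beta,\gamma}\cap[1,N]$ I would obtain
\[
c_0 \sum_{n\in F_{\beta,\gamma}\cap[1,N]} n^{\gamma-1}\ \leq\ \sharp\bigl(E_\beta\cap[1,(N+1)^\gamma]\bigr)\ =\ o\bigl(N^{\gamma}\bigr),
\]
where the last equality applies the previous lemma at scale $(N+1)^\gamma\sim N^\gamma$.

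To conclude, I would set $k:=\sharp(F_{\beta,\gamma}\cap[1,N])$. Since $\gamma>1$, the function $n\mapsto n^{\gamma-1}$ is increasing, so the left-hand sum is minimized, under the constraint of having $k$ summands in $[1,N]$, when $F_{\beta,\gamma}\cap[1,N]=\{1,\dots,k\}$; this yields $\sum n^{\gamma-1}\geq k^\gamma/\gamma$. Combining with the previous display, $k^\gamma=o(N^\gamma)$, hence $k=o(N)$, which is the vanishing density claim.

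I expect no substantial obstacle, since the argument is essentially a packing comparison. The only points requiring mild attention are absorbing the $O(1)$ rounding in the integer count of $[n^\gamma,(n+1)^\gamma]\cap\N$ into the constant $c_0$ (valid for $n\geq n_0$ with $n_0$ depending only on $\gamma$) and dealing with the finitely many small $n<n_0$ separately, neither of which affects the asymptotic density.
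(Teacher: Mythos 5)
Your argument is correct, and it takes a genuinely different route from the paper's. The paper does not use the previous lemma as a black box: it goes back to the growth inequality $\sum_{k\le n}\log(1+\alpha_k)\le d'\log(n+1)$, restricts attention to indices $n$ in an annulus $[\varepsilon N, N-1]$ so that every block $[n^\gamma,(n+1)^\gamma]$ has length at least $\gamma(\varepsilon N)^{\gamma-1}$, deduces the explicit bound $\sharp\{[\varepsilon N,N-1]\cap F_{\beta,\gamma}\}/N=O\bigl(\log N/N^{(1-\beta)\gamma}\bigr)$, and finishes by splitting $[1,N]$ into $[1,\varepsilon N]\cup[\varepsilon N,N-1]$. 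You instead use only the conclusion of the previous lemma, namely $\sharp(E_\beta\cap[1,M])=o(M)$, together with a packing count over the essentially disjoint blocks and the rearrangement bound $\sum_{n\in S}n^{\gamma-1}\ge \sharp(S)^{\gamma}/\gamma$ for $S\subset[1,N]$; this avoids the annulus device entirely. Your version is more modular and, I would say, cleaner: it isolates exactly what is needed from Lemma 3.1 and replaces the paper's uniform lower bound on block lengths (valid only for $n\ge\varepsilon N$) by the rearrangement inequality, which handles small $n$ automatically. What the paper's route buys is a self-contained quantitative decay rate for the density straight from the growth hypothesis; your route gives the qualitative statement the lemma actually asserts, and would also yield a rate if one feeds in the quantitative form of Lemma 3.1. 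The minor bookkeeping issues in your argument --- the $O(1)$ correction in the integer count of each block, the possible one-point overlaps of consecutive blocks (an extra additive $N=o(N^\gamma)$ since $\gamma>1$), and the finitely many $n<n_0$ --- are exactly the ones you flag, and none affects the conclusion.
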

\begin{proof}
For any $\e\in (0,1/2)$ and any integer $N\ge 2$, we have
\begin{equation*} \begin{split}
d'&\geq \frac{1}{\log(N+1)^\gamma}\sum_{(\e N)^\gamma\leq k\leq N^\gamma} \log(1+\alpha_k)\\
&\geq \frac{1}{\log(N+1)^\gamma}\sum_{n\in [\e N,N-1]\cap F_{\beta,\gamma}} \sum_{k\in [n^\gamma,(n+1)^\gamma)} \log(1+{\alpha_k})\\
&\geq \frac{K}{\gamma\log(N+1)}\sum_{n\in [\e N,N-1]\cap F_{\beta,\gamma}} \sum_{k\in [n^\gamma,(n+1)^\gamma)}k^{-\beta}\qquad N\ge 1\,.
\end{split}\end{equation*}
For any fixed $n\in [\e N,N-1]\cap F_{\beta,\gamma}$, there exists $t\in (n,n+1)$ such that $(n+1)^\gamma -n^\gamma =\gamma t^{\gamma-1}$. As $n\ge \e N$, we have $t/N>n/N\ge\e$ so that for $n\in [\e N,N-1]\cap F_{\beta,\gamma}$ the following bound holds true
\begin{equation*}\begin{split}
\sum_{n^\gamma\leq k<(n+1)^\gamma}k^{-\beta}&\geq N^{-\beta\gamma} \big((n+1)^\gamma-n^\gamma-2\big)\\ & = N^{-\beta\gamma} \big(\gamma t^{\gamma-1}-2\big)\\
&> N^{-\beta\gamma} \big(\gamma(\e N)^{\gamma-1}-2\big).
\end{split}\end{equation*}
Thus we have
\begin{equation*}\begin{split}
d'&\geq  \frac{K}{\gamma\log(N+1)}\sum_{n\in [\e N,N-1]\cap F_{\beta,\gamma}} \Big(\gamma \e^{\gamma-1}N^{(1-\beta)\gamma-1} +{\it o}(N^{(1-\beta)\gamma-1})\Big)\\
&\geq\frac{K}{\log(N+1)} \Big(\e^{\gamma-1}N^{(1-\beta)\gamma-1} +o(N^{(1-\beta)\gamma-1})\Big)
\sharp\{[\e N,{N-1}]\cap F_{\beta,\gamma}\}
\end{split}\end{equation*}
and we may deduce
\begin{equation*}\begin{split}
\frac{d'}{K\e^{1-\gamma}}
\frac{\log(N+1)}{N^{(1-\beta)\gamma}}\Big(1+o(1)\Big)\geq \frac{\sharp \{[\e N,{N-1}]\cap F_{\beta,\gamma}\}}{N}\, .
\end{split}\end{equation*}
For $N$ big enough, we have
\[
\frac{\sharp \{[\e N,{N-1}]\cap F_{\beta,\gamma}\}}{N}\leq \e
\]
and finally
\[
\frac{\sharp \{[1,{N-1}]\cap F_{\beta,\gamma}\}}{N}\leq \frac{\sharp \{[1,\e N]\cap F_{\beta,\gamma}\}}{N}+\frac{\sharp \{[\e N,{N-1}]\cap F_{\beta,\gamma}\}}{N}\leq 2\e\, .
\]
\end{proof}

\subsection{Proofs of the theorems.}
To build up a negative type function having the properties required in Theorem 1.2, we begin to construct a suitable sequence of positive definite functions.
\vskip0.2truecm\noindent
Fix $\beta\in (0,1)$ and $\gamma>1/\beta$. For any fixed $n\not\in F_{\beta,\gamma}$, we can chose $k(n)\in [\,n^\gamma,(n+1)^\gamma\,]\backslash {\bf E_\beta}$ in such a way that
\[
n^\gamma \leq k(n)\leq (n+1)^\gamma\,,\qquad k(n)^{-\beta}\geq \alpha_{k(n)}=\frac{\mu(K^{k(n)+1})-\mu(K^{k(n)})}{\mu(K^{k(n)})}\, .
\]
Denote by $\mathcal{B}(L^2(G,\mu))$ the Banach algebra of all bounded operators on the Hilbert space $L^2(G,\mu)$ and by $\lambda:G\to \mathcal{B}(L^2(G,\mu))$ the left regular unitary representation of $G$, defined as $(\lambda(s)a)(t):=a(s^{-1}t)$ for $a\in L^2(G,\mu)$ and $s,t\in G$.
\par\noindent
Consider now the function $\xi_n:=\mu(K^{k(n)})^{-1/2}\chi_{K^{k(n)}}\in L^2(G,\mu)$, with unit norm, and set
\[
\omega_n\,: G\to\R\quad \omega_n(s):=<\xi_n,\lambda(s)\xi_n>_{L^2(G)}=\frac{\mu(sK^{k(n)}\cap K^{k(n)})}{\mu(K^{k(n)})}\, .
\]
\vskip0.2truecm\noindent
By construction we have that
\par\noindent
$\bullet$ $\omega_n\in C_c(G)$ is a continuous, normalized, positive definite function with compact support. Moreover
\par\noindent
$\bullet$ $\omega_n(s)\geq 1-n^{-\beta\gamma}$ for all $s\in K$.
\vskip0.2truecm\noindent
In fact, by the translation invariance of the Haar measure and since $K^{k(n)}\supset sK^{k(n)-1}$ for $s\in K$, we have
\[
\begin{split}
\omega_n(s)&=\frac{\mu(sK^{k(n)}\cap K^{k(n)})}{\mu(K^{k(n)})} \ge \frac{\mu(sK^{k(n)}\cap sK^{k(n)-1})}{\mu(K^{k(n)})} =\frac{\mu(s(K^{k(n)}\cap K^{k(n)-1})}{\mu(K^{k(n)})} \\
&=\frac{\mu(K^{k(n)}\cap K^{k(n)-1})}{\mu(K^{k(n)})} =\frac{\mu(K^{k(n)-1})}{\mu(K^{k(n)})} =(1+\alpha_{k(n)})^{-1} \ge 1-\alpha_{k(n)}\\
&\geq 1-k(n)^{-\beta}\geq 1-n^{-\beta\gamma}\, .
\end{split}
\]
\begin{lem}
For $n\not\in F_{\beta,\gamma}$ and any integer $p\geq 1$, we have
\begin{equation}
\omega_n(s)\geq 1-pn^{-\beta\gamma}\qquad s\in K^p\, .
\end{equation}
\end{lem}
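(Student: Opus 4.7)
The plan is to prove the estimate by induction on $p$, with a sub-additivity inequality for $1 - \omega_n$ as the main tool. Setting $A := K^{k(n)}$, the left-invariance of $\mu$ together with the identity $\mu(sA \cap A) = \mu(A) - \mu(A \setminus sA)$ gives
$$
1 - \omega_n(s) \;=\; \frac{\mu(A \setminus sA)}{\mu(A)}, \qquad s \in G,
$$
and this rewriting is what makes the linear-in-$p$ dependence accessible.

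The key step is the set-theoretic inclusion
$$
A \setminus (st)A \;\subseteq\; (A \setminus sA) \,\cup\, s(A \setminus tA), \qquad s,t \in G,
$$
verified by splitting $x \in A \setminus stA$ according to whether $x$ lies in $sA$ or not: if $x \notin sA$ then $x \in A \setminus sA$; otherwise $x = sy$ with $y \in A$, and $sy \notin stA$ forces $y \in A \setminus tA$. Taking Haar measure and using left-invariance yields the sub-additivity
$$
1 - \omega_n(st) \;\leq\; (1 - \omega_n(s)) + (1 - \omega_n(t)).
$$

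The base case $p = 1$ is exactly the bound $\omega_n(s) \geq 1 - n^{-\beta\gamma}$ for $s \in K$ already established in the discussion preceding the lemma. For the inductive step, any $s \in K^p$ with $p \geq 2$ decomposes as $s = s_1 t$ with $s_1 \in K$ and $t \in K^{p-1}$; combining sub-additivity with the inductive hypothesis gives
$$
1 - \omega_n(s) \;\leq\; (1-\omega_n(s_1)) + (1-\omega_n(t)) \;\leq\; n^{-\beta\gamma} + (p-1)n^{-\beta\gamma} \;=\; p\,n^{-\beta\gamma},
$$
which is the claim.

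I expect no serious obstacle: the sub-additivity is a short set-theoretic computation followed by an application of left-invariance of $\mu$, and the induction is immediate once it is in hand. The one pitfall I would avoid is the tempting Hilbert-space identity $1 - \omega_n(s) = \frac{1}{2}\|\lambda(s)\xi_n - \xi_n\|^2$, which via the triangle inequality on $\lambda(s_1\cdots s_p)\xi_n - \xi_n$ only produces a bound of order $p^{2}n^{-\beta\gamma}$; the sharper linear bound demands the $L^1$-type symmetric-difference argument above rather than the $L^2$ one.
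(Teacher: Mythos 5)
Your proof is correct, and it takes a genuinely different route from the paper's. The paper writes $s=s_1\cdots s_p$ with $s_j\in K$, telescopes $\xi_n-\lambda(s)\xi_n$ and applies the triangle inequality in $L^2(G,\mu)$ via the identity $2(1-\omega_n(\sigma))=\|\xi_n-\lambda(\sigma)\xi_n\|^2$ --- precisely the Hilbert-space route you flag as a pitfall, and your instinct about it is sound: that computation yields $1-\omega_n(s)\le p^2\sup_{\sigma\in K}(1-\omega_n(\sigma))\le p^2\,n^{-\beta\gamma}$, which is quadratic in $p$ and does not by itself give the stated linear bound for $p\ge 2$. (The paper's final displayed bound $2p^2n^{-2\beta\gamma}$ is not supported by the preceding lines, since only $1-\omega_n(\sigma)\le n^{-\beta\gamma}$ for $\sigma\in K$ is available, not its square; had the exponent $-2\beta\gamma$ been correct, the lemma would follow by noting that either $pn^{-\beta\gamma}\le 1$, whence $(pn^{-\beta\gamma})^2\le pn^{-\beta\gamma}$, or the claim is trivial because $1-\omega_n\le 1$.) Your subadditivity $1-\omega_n(st)\le(1-\omega_n(s))+(1-\omega_n(t))$, obtained from the inclusion $A\setminus (st)A\subseteq(A\setminus sA)\cup s(A\setminus tA)$ and left-invariance of $\mu$, delivers the linear bound exactly as stated, so yours is the argument that actually proves the lemma as written. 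That said, the only downstream use of the lemma is the uniform convergence on each $K^p$ of $\sum_{n\notin F_{\beta,\gamma}}(1-\omega_n(s))$, for which summability in $n$ at fixed $p$ is all that matters; the quadratic-in-$p$ bound $p^2n^{-\beta\gamma}$ (with $\beta\gamma>1$) would have sufficed there, so the paper's argument is salvageable by weakening the statement, while yours requires no such adjustment.
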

\begin{proof} Consider $s=s_1\cdots s_p\in K^p$ for some $s_1,\cdots,s_p\in K$ and notice that
\begin{equation*}\begin{split}
2\big(1-\omega(s)\big)&=||\xi_n-s\xi_n||^2=||\xi_n-s_1\cdots s_p\xi_n||^2 \\
&=||\xi_n-s_1\xi_n+s_1\xi_n-s_1s_2\xi_n+\cdots + s_1\cdots s_{p-1}\xi_n-s_1\cdots s_p\xi_n||^2\\
&\leq \big(||\xi_n-s_1\xi_n||+||\xi_n-s_2\xi_n||+\cdots +||\xi_n-s_p\xi_n||\big)^2\\
&\leq p^2\sup_{\sigma\in K} ||\xi_n-\sigma \xi_n||^2\\
&= 2p^2\sup_{\sigma\in K}\big(1-\omega_n(\sigma)\big)\leq 2p^2n^{-2\beta\gamma}.
\end{split}\end{equation*}
\end{proof}\noindent
{\it Proof of Theorem 1.1.} By previous lemma, the series $\sum_{n\not \in F_{\beta,\gamma}} \big(1-\omega_n(s)\big)$ converges uniformly on $K^p$ for any integer $p\ge 1$. Hence, it converges uniformly on each compact subset of $G$ and
\[
\ell:G\to [0,+\infty)\qquad \ell(s)=\sum_{n\not \in F_{\beta,\gamma}} \big(1-\omega_n(s)\big)
\]
is a continuous, negative definite function. The function $\ell$ is proper: in fact, if $m\in\mathbb{N}$ is greater or equal to the integer part of $(N+1)^\gamma$, then, for $s\not \in K^{2m}$, we have $\omega_n(s)=0$ for all $n\in [1,N]\backslash F_{\beta,\gamma}$ so that
\begin{equation}
\ell(s)\geq \sharp\{\, [1,N]\backslash F_{\beta,\gamma}\}\qquad s\not \in K^{2m}\, .
\end{equation}
According to Lemma 3.2 we can write $ \sharp\{\, [1,N]\backslash F_{\beta,\gamma}\}=N(1-\e_N)$ for $\e_N\to 0$. The set $\{s\,|\,\ell(s)\leq N(1-\e_N)\,\}$ is contained in $K^{2m}$, where $m$ is the integer part of $(N+1)^\gamma+1$, whose volume $\leq c\,{2^d} \big(N+1)^\gamma+1\big)^d{=O(N^{\gamma d})}$
For fixed $\e>0$ and $N$ large enough, we have, for $x\in [N-1,N]$,
\[
\mu\{\ell\leq x(1-\e)^{-1}\}\leq c'N^{\gamma d}\leq c''x^{\gamma d}
\]
so that $\mu\{\ell\leq x\} \leq c''{x^{\gamma d}}$.
\hfill $\Box$\par\noindent
{\it Proof of Theorem 1.2.} Let us chose a decreasing sequence $d_m$ converging to $d_h$, and for all $m$, chose a function $\ell_m$ such that $\ell_m(x)=O(x^{d_m})$, $x\to +\infty$ and $\sup_K \ell_d(x)=1$ (by normalization). Hence $\ell\leq k^2$ on $K^k$.
It is enough to set $\ell=\sum_m 2^{-m}\ell_m$ : $\{\ell\leq x\}\subset \{\ell_m\leq 2^m x\}$, whose measure is $O(x^{d_m})$.\par
\hfill $\Box$\par\noindent
{\it Proof of Corollary 1.3.} Recall first that the GNS space $L^2(\lambda(\Gamma)'',\tau)$ can be identified, at the Hilbert space level, with $l^2(\Gamma)$. Consider now the continuous, proper, negative type function $\ell$ constructed in Theorem 1.2. By [5 $\S$ 10.2], the quadratic form
\[
\mathcal{E}:\mathcal{F}\to [0,+\infty)\qquad \mathcal{E}[a]=\sum_{s\in\Gamma}\ell(s)\, |a(s)|^2\, ,
\]
defined on the subspace $\mathcal{F}\subset l^2(\Gamma)$ where the sum is finite, is a noncommutative Dirichlet form whose associated nonnegative, self-adjoint operator on $l^2(\Gamma)$ is the multiplication operator by $\ell$ given by
\[
{\rm dom}(L):=\{a\in l^2(\Gamma):\ell\cdot a\in L^2(G,\mu)\}\quad (La)(s):=\ell (s)\cdot a(s)\quad s\in G\, .
\]
The stated result thus follows from Theorem 1.2 and the straightforward identification ${\rm sp}(L)=\{\ell(s)\in [0,+\infty): s\in\Gamma\}$.\par
\hfill $\Box$

\newpage
\normalsize
%
%

\end{document}